\newtheorem{thm}{Theorem}[section]
\newtheorem{theorem}[thm]{Theorem}
\newtheorem{lemma}[thm]{Lemma}
\newtheorem{construction}[thm]{Construction}
\newtheorem{definition}[thm]{Definition}
\title{Bishop Independence on the Surface of a Square
Prism}
\date{June 2020}
\author{Liam H. Harris \qquad Stephanie Perkins \qquad Paul A. Roach\\~\\
Department of Computing and Mathematics\\
University of South Wales\\
Pontypridd, UK.\\
\tt\{liam.harris, stephanie.perkins, paul.roach\}@southwales.ac.uk}
\begin{document}
\maketitle
\begin{abstract}
\normalsize
Bishop Independence concerns determining the maximum number of bishops that can be placed on a board such that no bishop can attack any other bishop. This paper presents the solution to the bishop independence problem, determining the bishop independence number, for all sizes of boards on the surface of a square prism. \\

\textbf{Mathematics Subject Classification:} 05C69, 00A08
\end{abstract}

\section{Introduction}

In Chess, the independence problem for a given board asks for a maximal placement of a given chess piece such that no piece in the placement can move to the position of another in a single chess move. The independence number is the cardinality of such a placement. The relation of this problem to graph problems is of special interest to combinatorialists. For the graph $G$ with set of vertices $V$, the set $S \subset V$ is independent if no two vertices in $S$ are adjacent in $G$. The independence number $\beta_{0}(G)$ is the maximum cardinality of an independent set of the graph $G$. This directly relates to the problem of finding the independence number of a specified chess piece on a given board.\\

Results for the independence number for the bishop piece on a variety of board structures are known. \cite{Yaglom} determine the independence number of bishops for the case of $n \times n$ square boards and \cite{Berghammer} presents a proof for the independence number for bishops on the $n \times m$ rectangular board. \cite{DeMaio_Faust} consider the independence problem for bishops on the $n \times m$ torus; this paper also addresses the related domination problem which asks, for a given board, for a minimum placement of a given chess piece such that every square of the board is either occupied or can be reached in a single move by a piece in the placement. This later problem and variants have received recent interest, for example \cite{Sown_Naidu} have addressed perfect domination for bishops, kings and rooks on the $n \times n$ square board. Succinctly expressed summaries of some of these results and of other similar results, for a variety of pieces and boards, are given by \cite{Watkins}.\\

The aforementioned work only addresses board topologies created from a single $n \times m$ board, however research has also been conducted for the surface of an $n \times m \times l$ cuboid. This includes recent work by \cite{Omran} on the domination and independence of rooks and kings on the surface of the $n \times n \times n$ cube. The current authors (\cite{Harris1}) have previously presented results for the bishop independence number on the surface of the $n \times n \times n$ cube. This paper extends that work to provide the bishop independence number for the $n \times n \times m$ square prism; without loss of generality it will be assumed that $n \leq m$. We define B$^3_{n,n,m}$ to be the graph which represents permissible bishop moves on the surface
of an $n \times n \times m$ square prism.\\

The surface of a $n \times n \times m$ square prism can be considered as a net consisting of two $n \times n$ faces and four $n \times m$ faces, with the 24 edges paired and identified in a specific way such that the surface of a square prism is formed. The bishop piece moves on this board in the same manner it does on the traditional $8 \times 8$ square board moving along either of the diagonals that intersect at its current position. A bishop is said to \textit{cover} a square if it can reach that square in a single move.

\begin{definition} \label{diagonal}
A \textbf{diagonal} is a set of consecutive squares that are diagonally adjacent.
\end{definition}

Using Definition~\ref{diagonal} it is possible to define different types of diagonals, and the properties of a given diagonal, which is of use in exploring the independence problem for the bishop piece.

\begin{definition} \label{ddiagonal}
A \textbf{maximal diagonal} is a set of all consecutive squares that are diagonally adjacent. All maximal diagonals are distinct.
\end{definition}

Since a bishop can move to any square on a diagonal on which it is currently placed it is impossible for two bishops to be placed on the same diagonal and be independent. Determining the number of distinct maximal diagonals of a board can provide a useful bound to the bishop independence number of that board. To do this it will be important to distinguish between diagonals that do cross identities and diagonals that do not.

\begin{definition} \label{subdiagonal}
A \textbf{bounded diagonal} is a set of all consecutive squares that are diagonally adjacent without considering identities.
\end{definition}

When imposing the grid structure of the chessboard onto the surface of a square prism non-regularity occurs only for the 24 squares at the corners of each face. These squares have only 7 surrounding squares as opposed to the 8 neighbours possessed by the remaining squares. Thus a bishop on a corner square can only move in one of three directions rather than four. Hence, there is a need to distinguish between maximal diagonals that begin at a corner and those that do not.

\begin{definition} \label{openclosed2}
A maximal diagonal is \textbf{closed} if the squares can be ordered such that they cycle. A  maximal diagonal is \textbf{open} if it is not closed.
\end{definition}

The cuboid is an interesting object of study for Bishop movement because its closed maximal diagonals can be separated into four sets and each of its open maximal diagonals contains only squares from a single face. Determining the number of maximal diagonals and hence the independence number for bishops on a $n \times n \times m$ square prism is more complicated since the open maximal diagonals may contain squares from multiple faces and the closed maximal diagonals cannot be separated into as small a number of sets as is the case for the cube.

\begin{lemma}
\label{lemma_BishInd:Expand2}
%$\beta_0$(B$^3_{n,n,m}) = \beta_0$(B$^3_{n,n,m+4n}$).
$\beta_0(B^3_{n,n,m}) = \beta_0(B^3_{n,n,m+4n})$.
\end{lemma}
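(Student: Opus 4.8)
The plan is to exhibit the surface of the $n\times n\times(m+4n)$ prism as the surface of the $n\times n\times m$ prism with an extra $4n\times4n$ cylindrical collar spliced into its side faces, and to show that this splicing changes $B^3_{n,n,m}$ only in ways that do not affect the independence number. First I would fix coordinates: unrolling the four $n\times m$ side faces gives a cylindrical band of axial length $m$ and circumference $4n$ (with circumferential coordinate $4n$ identified with $0$), capped at its two ends by the $n\times n$ faces. Away from the caps a maximal diagonal is a straight line of slope $\pm1$ on this band, and since the circumference is $4n$, one complete turn advances the axial coordinate by exactly $4n$. I would then define the splice: choose a cross-section of the band, between axial levels $j$ and $j+1$ for some $1\le j\le m-1$, cut there, and insert a fresh cylindrical band $B$ of axial length $4n$ and circumference $4n$; the result is precisely the surface of the $n\times n\times(m+4n)$ prism. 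Every square of the smaller prism survives in the larger one, and the new squares are exactly those of $B$.

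The key structural step is that splicing induces a bijection $\psi$ between the maximal diagonals of $B^3_{n,n,m}$ and those of $B^3_{n,n,m+4n}$, carrying open diagonals to open and closed to closed, and such that for each square $u$ outside $B$ the two maximal diagonals through $u$ in the larger prism are the $\psi$-images of the two through $u$ in the smaller one. Indeed, a diagonal that meets the chosen cross-section enters $B$ at some circumferential coordinate $x$ and, after winding once around $B$, leaves it at coordinate $x+4n\equiv x$; hence it re-attaches to exactly the strand from which it was cut, and no maximal diagonal is split, merged, created or destroyed --- it is merely lengthened by $4n$ if it crossed the cut. Since two bishops attack one another precisely when they share a maximal diagonal, it follows that $B^3_{n,n,m}$ is the subgraph of $B^3_{n,n,m+4n}$ induced on the squares outside $B$, and in particular $\beta_0(B^3_{n,n,m})\le\beta_0(B^3_{n,n,m+4n})$.

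For the reverse inequality I would take a maximum independent set $S'$ of $B^3_{n,n,m+4n}$ and let $t$ be the number of its bishops lying in $B$. Deleting them leaves $S'\setminus B$, which --- viewed in the smaller prism --- is an independent set of size $|S'|-t$ that leaves completely unoccupied the $t$ type-A and $t$ type-B maximal diagonals of $B^3_{n,n,m}$ corresponding under $\psi^{-1}$ to the $2t$ distinct maximal diagonals used by $S'\cap B$. So it suffices to place $t$ further mutually non-attacking bishops on squares of $B^3_{n,n,m}$ that are incident only to these $2t$ free diagonals. Recording each square as the edge joining the two maximal diagonals through it presents $B^3_{n,n,m}$ as the line graph of an incidence graph $H$ (for this surface the maximal diagonals may be $2$-coloured so that the two through any square receive different colours, so $H$ is bipartite with parts the type-A and type-B diagonals), whence $\beta_0(B^3_{n,n,m})$ is the maximum matching number of $H$; what is needed is a perfect matching in $H$ between the $t$ free type-A and $t$ free type-B diagonals.

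I expect the verification of this last matching to be the main obstacle. Such a matching exists in the corresponding incidence graph for $B^3_{n,n,m+4n}$ --- it is realised by the bishops of $S'\cap B$ --- but one must show that Hall's condition for it survives the passage to the edge-poorer graph $H$. The route I would take is to trace explicitly the strands that these $2t$ diagonals follow through the spliced region: being forced to wind repeatedly around the band of side faces they are long and meet many squares, and I would argue that the incidences they already possess within $B^3_{n,n,m}$ suffice for Hall's condition --- equivalently, that each ``band edge'' is redundant, running parallel to an edge already present outside $B$. Taking the cross-section immediately adjacent to one of the $n\times n$ caps should make this bookkeeping cleanest, since the strands in question then descend from a single cap and their re-crossings of the side faces are easiest to track.
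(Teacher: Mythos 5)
Your splice of a $4n\times 4n$ cylindrical collar into the side faces, and the observation that a slope-$\pm1$ line winding once around a band of circumference $4n$ returns to its entry coordinate, is exactly the paper's construction. The induced bijection $\psi$ between maximal diagonals and the consequent inequality $\beta_0(B^3_{n,n,m})\le\beta_0(B^3_{n,n,m+4n})$ are sound; indeed this is essentially all the paper's own proof writes down (it asserts that the maximal diagonals of the extended board ``contain the same squares as they did in the original board'' and concludes at once, without separately treating a maximum independent set of the larger board that occupies collar squares).

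The gap is the one you flag yourself: the reverse inequality. Everything reduces to the single claim that every collar square is redundant, i.e.\ that the two maximal diagonals through a square of $B$ also meet at some square outside $B$. You name this as the thing to be verified but do not verify it, and it is not automatic: for two strands of opposite slope inside the collar the intersection condition is $2y\equiv b-a\pmod{4n}$ in the axial coordinate $y$, which has exactly zero or two solutions in $\{1,\dots,4n\}$, so the local computation only ever produces meeting points that themselves lie in the collar; a further intersection outside $B$ can only be found by following the diagonals over the $n\times n$ caps, which is precisely the global bookkeeping the rest of the paper is devoted to. Note also that once the redundancy claim is in hand you need none of the matching, line-graph or Hall machinery: a bishop of $S'$ standing on a collar square can simply be slid to a square outside $B$ lying on the same two maximal diagonals; it then covers exactly the same two diagonals, so independence is preserved and the target square cannot already be occupied, and induction on the number of collar bishops finishes the argument. (Your claim that the maximal diagonals admit a $2$-colouring separating the two diagonals through each square also needs justification on this surface, since a closed diagonal reverses orientation each time it crosses a cap; but that issue evaporates once the Hall apparatus is dropped.) In short, the proposal follows the paper's route, is more candid than the paper about where the real difficulty sits, but leaves that difficulty unresolved.
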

\begin{proof}
For a $4n \times 4n$ square board with top and bottom sides identified, each maximal diagonal contains a square from the first and last column. For each maximal diagonal the start and end point is the same, vertically. Hence inserting such a board into the net of an $n \times n \times m$ cuboid between two columns of the $n \times m$ faces, would extend the board to a $n \times n \times (m+4n)$ cuboid such that the maximal diagonals contain the same squares as they did in the original board. Hence the independence number of the board is not changed.
\end{proof}

\newpage

\begin{lemma}
\label{lemma_BishInd:SquareFace}
All closed diagonals of an $n \times n \times m$ cuboid pass both square $n \times n$ faces of the cuboid at least once.
\end{lemma}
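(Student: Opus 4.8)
The plan is to exploit the cylindrical structure of the four $n\times m$ faces, which form a band separating the two square caps; write $S$ and $S'$ for these caps. On the band introduce coordinates by unrolling the rectangular faces $F_1,F_2,F_3,F_4$ (in cyclic order) into a grid of $4n$ columns, read modulo $4n$, and $m$ rows, where the row index $h$ runs from $1$ (the row adjacent to $S$) to $m$ (the row adjacent to $S'$). The crucial observation is that a diagonal, while it stays in the band, is a line of slope $\pm 1$ whose vertical sense never changes: the identifications gluing consecutive rectangular faces are edge-to-edge with no twist, so a diagonal crossing from $F_i$ to $F_{i+1}$ is not bent, merely continued. Hence $h$ is strictly monotone along any maximal portion of a diagonal contained in the band, and such a portion can end only by stepping off row $m$ onto $S'$ (if $h$ is increasing there) or off row $1$ onto $S$ (if $h$ is decreasing).

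Next I would show a closed diagonal must contain at least one square of the band. It cannot lie entirely inside one cap: the only maximal diagonals contained in an $n\times n$ cap are its two corner-to-corner diagonals, and each of these has two endpoints (the corner squares, where the onward direction points into the cone point at a prism vertex and is therefore absent), hence each is open rather than closed, by Definition~\ref{openclosed2}. It cannot lie entirely in the band either, since by the monotonicity just noted $h$ cannot increase or decrease forever within $\{1,\dots,m\}$. And it cannot meet both caps while avoiding the band, because $S$ and $S'$ share no edge. So a closed diagonal meets the band, and the monotonicity then forces it onto at least one cap, say $S'$.

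To close the loop I would analyse the passage across $S'$. The diagonal enters $S'$ from row $m$ of some $F_i$, traverses a straight diagonal segment of the square $S'$, and leaves $S'$ through a different edge of $S'$ (a slope-$\pm 1$ line in a square exits through a side other than the one by which it entered), hence onto row $m$ of some $F_j$; and since this step carries the diagonal \emph{away} from $S'$, the vertical sense on re-entering the band is now the decreasing one. Applying the monotonicity again, the diagonal descends through the band and steps onto $S$. Thus the closed diagonal meets both caps, which is the claim; the argument starting from $S$ in place of $S'$ is symmetric.

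I expect the main obstacle to be the $24$ corner (cone-point) squares, both in pinning down the gluing precisely enough to justify the ``slope $\pm 1$, constant vertical sense on the band, reversed by a cap'' picture, and in checking that the monotone passage through the band is not disturbed when the diagonal runs through a corner square in row $1$ or row $m$. The key point is that a closed diagonal never runs \emph{into} a cone point, since there it would terminate and be open; so at a corner square it uses only the available pair of directions, and one checks that using that pair at a row-$m$ corner square is exactly a step between the band and $S'$ (likewise at row $1$ with $S$). Corners therefore do not break the argument but are further instances of the band-to-cap transitions it already tracks. Finally, the degenerate cases in which a cap contains no non-corner square (small $n$) should be disposed of separately: there are then no closed diagonals at all, so the lemma holds vacuously.
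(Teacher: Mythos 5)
Your proof is correct and takes essentially the same approach as the paper's: monotone progress along the prism's axis while the diagonal stays on the rectangular band forces any closed diagonal onto a square cap, and the reversal of that sense upon crossing a cap forces a subsequent visit to the other cap --- the paper expresses exactly this via its labelled identities and figures, whereas you make it self-contained with cylinder coordinates and an explicit check of the corner squares. (One harmless slip: your closing remark that small $n$ admits no closed diagonals is false for $n=2$, where the paper's own count yields four closed diagonals on the $2 \times 2 \times 2$ cube, but nothing is lost since your corner-square analysis already handles caps consisting entirely of corner squares.)
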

\begin{proof}
Each vertical identity defined by the representation in Figures~\ref{fig_BishInd:CuboidK1A} to~\ref{fig_BishInd:CuboidK4}, labeled $1$, $2$, $3$, $4$, $5$ and $6$, of the $n \times m$ faces, identifies with an edge of a $n \times n$ face. Since movement in this representation is in one direction across the $n \times m$ faces, in order to close, a diagonal must pass one of these identities and thus pass a $n \times n$ face. Any diagonal crossing a $n \times n$ face returns to the $n \times m$ faces in the opposite orientation, and the argument applies again at least once and to the other $n \times n$ face in order for the diagonal to return to its original position in the correct orientation.
\end{proof}

%%%%%%%%%%%%%%%%%%%%%%%%%%%%%%%%%%%%%%%%%%%%%%%%%%%
\section{Bound}

\begin{lemma}
\label{lem_BishInd:SquarePrisim}
For B$^3_{n,n,kn+r}$ with $k, r \in \mathbb{Z}$ and $0\leq r<n$:
$$
\beta_0(B^3_{n,n,kn+r}) \leq
\begin{cases}
	2n + 4 & \text{for k odd and $r=0$,} \\
	2n + 3 - r & \text{for k odd and $0<r<n$,} \\
	n + 5 & \text{for k even and $r=0$,} \\
	n + 3 + r & \text{for k even and $0<r<n$.} 
\end{cases}
$$
\end{lemma}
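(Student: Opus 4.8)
The plan is to bound $\beta_0$ from above by covering the surface of the square prism with a family $\mathcal{D}$ of maximal diagonals and then invoking the elementary principle that any two bishops on squares of a single maximal diagonal attack each other, so an independent placement contains at most one bishop per member of $\mathcal{D}$; hence $\beta_0(B^3_{n,n,m}) \le |\mathcal{D}|$. The work is to build, in each case, a covering family of exactly the claimed size.

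First I would cut down the number of cases. By Lemma~\ref{lemma_BishInd:Expand2}, $\beta_0(B^3_{n,n,kn+r}) = \beta_0(B^3_{n,n,(k+4)n+r})$, while the right-hand side of the claimed inequality depends on $k$ only through its parity and is therefore unchanged under $k\mapsto k+4$. Since $n\le m$ forces $k\ge 1$, it suffices to establish the bound for the base values $k\in\{1,2,3,4\}$ and every $0\le r<n$; Lemma~\ref{lemma_BishInd:Expand2} then propagates it to all admissible $m$.

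Next I would construct the cover for a base prism using the net from the proof of Lemma~\ref{lemma_BishInd:SquareFace}: the four $n\times m$ faces form a band that every diagonal crosses in a single direction, capped by the two $n\times n$ faces. Take the family of all maximal diagonals that meet one fixed transverse edge of the band. Since such a diagonal advances monotonically along the band between consecutive visits to a square face, these diagonals cover every square of the four $n\times m$ faces as soon as they are pairwise distinct there, and, being closed, each also meets both $n\times n$ faces by Lemma~\ref{lemma_BishInd:SquareFace}, so they simultaneously cover most of the square faces. How many there are depends on how the wrap-around identification of the band aligns a diagonal's traversals of the band with its reflections off the $n\times n$ faces: for $k$ odd the alignment keeps roughly $2n$ candidates distinct, whereas for $k$ even adjacent candidates coincide and only about $n$ survive, and the residue $r$ shifts the landing positions on the square faces, changing the count by $-r$ (respectively $+r$). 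Finally I would adjoin a bounded number of further maximal diagonals --- the open ones, whose endpoints lie among the $24$ corner squares, together with a few extra closed ones --- to cover the squares of the $n\times n$ faces still uncovered, and record any coincidence (as on the ordinary square board, where the two unit diagonals at opposite corners lie on one common transverse diagonal) that lets two chosen diagonals be replaced by a single one. Doing this for each base value of $k$, and separately for $r=0$ and $0<r<n$, yields the additive constants $4,3,5,3$ and hence the stated totals.

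The hard part will be this last layer of bookkeeping: following a diagonal through the sequence of vertical identities $1,\dots,6$ and the orientation reversal it suffers at each $n\times n$ face in order to count \emph{exactly}, not merely up to $O(1)$, the distinct maximal diagonals meeting the chosen transverse edge, and then pinning down precisely which square-face squares remain uncovered. Getting the additive term and the sign of the $r$-contribution right in every parity/residue regime --- in particular the special behaviour at $r=0$ --- is where the argument is delicate and an off-by-one error is easiest to make; the rest is a lengthy but routine check that the exhibited family really covers the surface.
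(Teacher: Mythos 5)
Your reduction to $k\in\{1,2,3,4\}$ via Lemma~\ref{lemma_BishInd:Expand2} is exactly what the paper does and is fine. The gap is in your core principle. You propose to exhibit a \emph{covering} subfamily $\mathcal{D}$ of maximal diagonals with $|\mathcal{D}|$ equal to the claimed bound and conclude $\beta_0\le|\mathcal{D}|$. The paper does something different and strictly easier: it counts \emph{all} maximal diagonals --- the $12$ open ones plus the closed ones organised into cycle sets, giving totals $4n-2r+6$ ($k$ odd, $0<r<n$), $4n+8$ ($k$ odd, $r=0$), $2n+2r+6$ ($k$ even, $0<r<n$) and $2n+10$ ($k$ even, $r=0$) --- and then observes that since no maximal diagonal crosses itself, every square lies on exactly two maximal diagonals; independent bishops therefore occupy pairwise disjoint \emph{pairs} of maximal diagonals, so $\beta_0\le D/2$. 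That halving step is the missing idea in your write-up, and it is what turns the diagonal counts into the stated bounds.

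Without it, your route requires a cover of size exactly $D/2$, which is a genuinely stronger structural claim: it amounts to finding $D/2$ maximal diagonals whose complement is pairwise non-intersecting. On the flat board this comes for free from the bipartition into positive and negative diagonals, but on the prism a closed maximal diagonal reverses orientation each time it crosses an $n\times n$ face, so no such global bipartition exists and the existence of so efficient a cover is not at all clear. Worse, the specific family you propose --- all maximal diagonals meeting one fixed transverse column of the band --- is likely to be \emph{all} closed maximal diagonals (each closed diagonal wraps the full band and so meets every column), which would give a bound of roughly $D$, twice too weak. Finally, every number in the statement is produced by the counting you explicitly defer as ``bookkeeping,'' so even granting the strategy, the proof is not carried out. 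I would recommend replacing the covering argument with the count-all-diagonals-and-halve argument, for which the enumeration of closed diagonals by their itineraries through the identities (as in the proof of Lemma~\ref{lemma_BishInd:SquareFace}) is the only work required.
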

\begin{proof}
The only boundaries to bishop movement on the surface of a cuboid are the corners of each face. Each face has 4 corners; there are 6 faces and for these 24 corners there are 12 distinct open diagonals. These are the 4 major diagonals, bounded diagonals containing the most squares, of the $n \times n$ faces and the 8 diagonals that connect the corners of the $n \times kn+r$ faces. For $r=0$, $kn+r$ is a multiple of $n$ and hence these 8 open diagonals will not pass the $n \times n$ faces. For $0< r < n$, $kn + r$ is not a multiple of $n$ and hence each of these 8 open diagonals will pass one of the $n \times n$ faces, altering the number of maximal diagonals on the cuboid compared to that of $r=0$.\\

For $0< r < n$, each $n \times n$ face is visited, exactly once, by each of four open diagonals that commence from $n \times kn+r$ faces. Selecting the leftmost $n \times n$ face, there are $n$ positive bounded diagonals incident to identity $5$ the lowest of which is in the open diagonal that is contained on this face alone. The $r^{th}$ diagonal above this is in a open diagonal which passes this square face. The remaining $n-2$ diagonals can be split into two sets, the $n-1-r$ diagonals above, and the $r-1$ diagonals below that open diagonal which passes this $n \times n$ face (see Figures~\ref{fig_BishInd:CuboidK1A} to~\ref{fig_BishInd:CuboidK4}). Taking movement across identity $7$ as granted, if necessary, the properties of all remaining diagonals of the board will be the same as for those of these two sets of diagonals. For $r=0$ the second set does not exist and the first set contains $n-1$ diagonals since there is no open diagonal that commences from a $n \times kn+r$ face which also crosses a $n \times n$ face. Hereafter it is taken that for $r=0$ there is no second set. By Lemma~\ref{lemma_BishInd:Expand2} only the cases of $k = 1,~2,~3$ and $~4$ need to be considered, since the addition of $4n$ to $m$ does not alter the independence number. \\

For $k=1$, the first diagonal set, containing $n-1-r$ diagonals for $0<r<n$ and $n-1$ diagonals for $r=0$, consists of diagonals that are closed, pass each $n \times n$ face once, and have the following path of movement: identities 1, 4 then 5. For $0<r<n$ the second diagonal set, containing $r-1$ diagonals, consists of diagonals that are closed, pass each $n \times n$ face twice, and have the following path of movement: identities 1, 4, 6, 3, 2 then 5. For $k=3$, the first diagonal set contains diagonals that are closed, pass each $n \times n$ face once, and have the following path of movement: identities 1, 6, 2 then 5. For $0<r<n$ the second diagonal set contains diagonals that are closed, pass each $n \times n$ face twice, and have the following path of movement: identities 1, 2, 3, 4, 6 then 5. In both of these cases, by symmetry there exist four sets of $n-1-r$ diagonals for $0<r<n$ (and four sets of $n-1$ diagonals for $r=0$) that traverse each square face once and for $0<r<n$ two sets of $r-1$ diagonals traversing each $n \times n$ face twice. This results in a total of $4(n-1-r) +2(r-1) + 12 = 4n -2r + 6$ maximal diagonals on the $n \times n \times kn+r$ cuboid for $k$ odd and $0<r<n$ (and a total of $4(n-1)+12=4n+8$ maximal diagonals on the $n \times n \times kn$ cuboid for $k$ odd).\\

For $k=2$, the first diagonal set, containing $n-1-r$ diagonals for $0<r<n$ and $n-1$ diagonals for $r=0$, consists of diagonals that are closed, pass each $n \times n$ face once, and have the following path of movement: identities 1, 4, 6, 3, 2 then 5. For $0<r<n$ the second diagonal set, containing $r-1$ diagonals, consists of diagonals that are closed, pass each $n \times n$ face twice, and have the following path of movement: identities 1, 6, 2 then 5. For $k=4$, the first diagonal set contains diagonals that are closed, pass each $n \times n$ face once, and have the following path of movement: identities 1, 2, 3, 4, 6 then 5. For $0<r<n$ the second diagonal set contains diagonals that are closed, pass each $n \times n$ face twice, and have the following path of movement: identities 1, 4 then 5. In both of these cases, by symmetry there exist two sets of $n-1-r$ diagonals for $0<r<n$ (and two sets of $n-1$ diagonals for $r=0$) that traverse each $n \times n$ face twice and for $0<r<n$ four sets of $r-1$ diagonals traversing each square face once. This results in a total of $2(n-1-r) +4(r-1) + 12 = 2n +2r + 6$ maximal diagonals on the $n \times n \times kn+r$ cuboid for $k$ even and $r>0$ (and a total of $2(n-1)+12=2n+10$ maximal diagonals on the $n \times n \times kn$ cuboid for $k$ even).\\

Since none of these maximal diagonals cross themselves a bishop will always cover 2 diagonals and hence the maximum number of bishops that can be placed independently on the surface of an $n \times n \times kn+r$ cuboid is $2n -r + 3$ bishops for $k$ odd and $0<r<n$, $2n + 4$ bishops for $k$ odd and $r=0$, $n+r+3$ bishops for $k$ even, and $0<r<n$ and $n+5$ bishops for $k$ even and $r=0$. 

\end{proof}

%%%%%%%%%%%%%%%%%%%%%%%%%%%%%%%%%%%%%%%%%%%%%%%%%%%%%%%
\section{Constructions}

There now follow constructions for placing a given number of bishops on an $n \times n \times m$ square prism chessboard, where $m=kn+r$ and $0\leq r<n$. 

\begin{construction}
\label{con_BishInd:nmevenA}
For $k$ odd, $n$ and $m$ even, $2n-r$ bishops are to be placed. Choosing either $n \times n$ face, from the center of either of its middle rows, for $r=0$ place $n$ bishops to the right and for $0<r<n$ place $r$ bishops in the cells to the right and a further $n-r$ to the right of these, extending onto the adjacent face if required. For the cells to the left of the center of the row place the remaining $n-r$ bishops while leaving the first $r$ cells empty, extending onto the adjacent face if required.
\end{construction}

An example of Construction~\ref{con_BishInd:nmevenA} is given in Figure~\ref{fig_BishInd:CuboidK1A}.

%\begin{figure}[h]
%\centering
	%\includegraphics[width=0.95\textwidth]{Figures/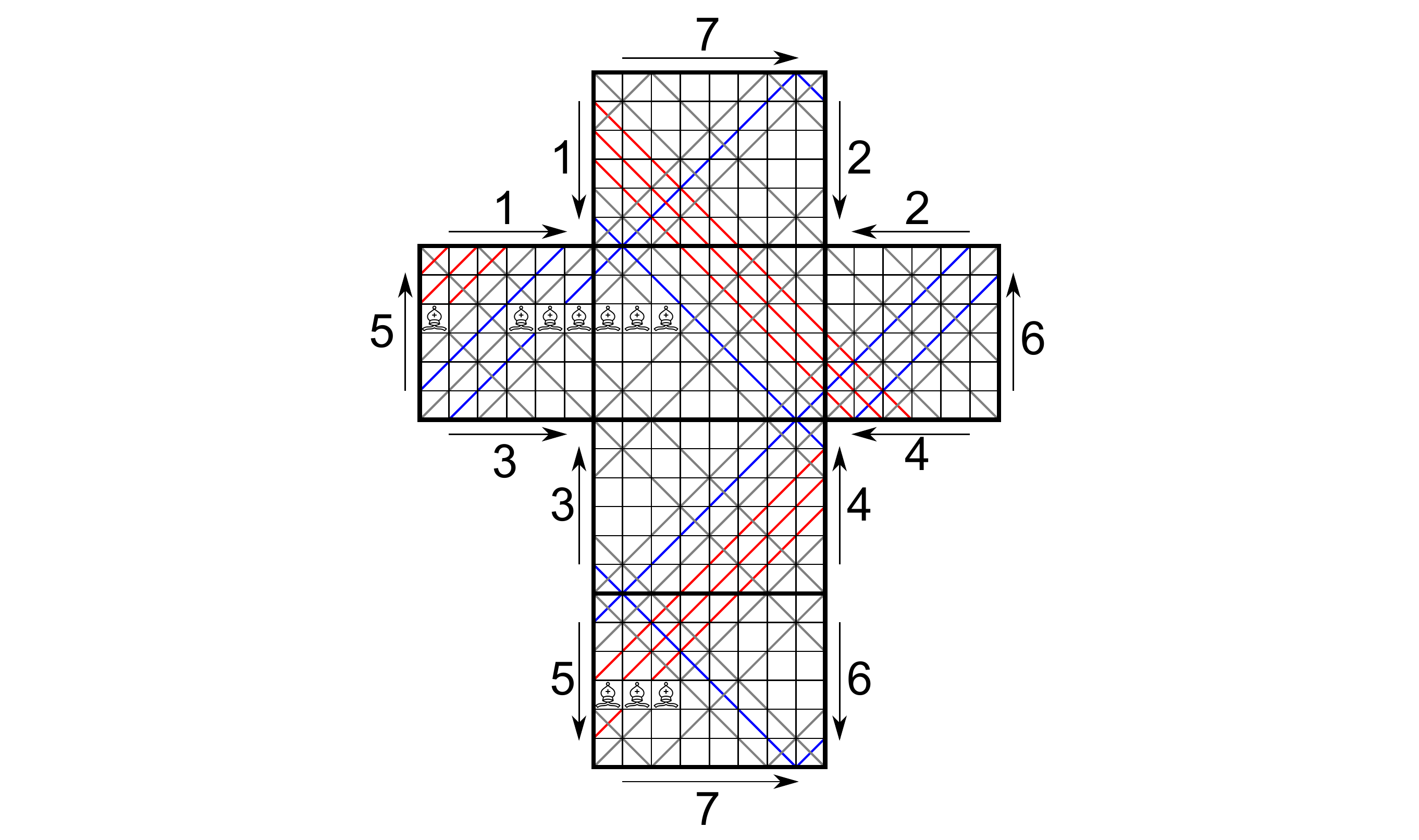}
	%\caption{Independent placement of bishops for $k$ odd, $n$ and $m$ even: a 6 $\times$ 6 $\times$ 8 Cuboid.}
	%\label{fig_BishInd:CuboidK1A}
%\end{figure}

\begin{construction}
\label{con_BishInd:nmevenB}
For $k$ even, $n$ and $m$ even, $n+r$ bishops are to be placed. Choosing either $n \times n$ face, from the center of either of its middle rows, for $r=0$ place $n$ bishops to the right and for $0<r<n$ place $r$ bishops in the cells to the right and a further $n-r$ to the right of these, extending onto the adjacent face if required. For $0<r<n$ place the remaining $r$ bishops in the cells to the left of the center of the row.
\end{construction}

An example of Construction~\ref{con_BishInd:nmevenB} is given in Figure~\ref{fig_BishInd:CuboidK1B}.

%\begin{figure}[h]
%\centering
	%\includegraphics[width=0.95\textwidth]{Figures/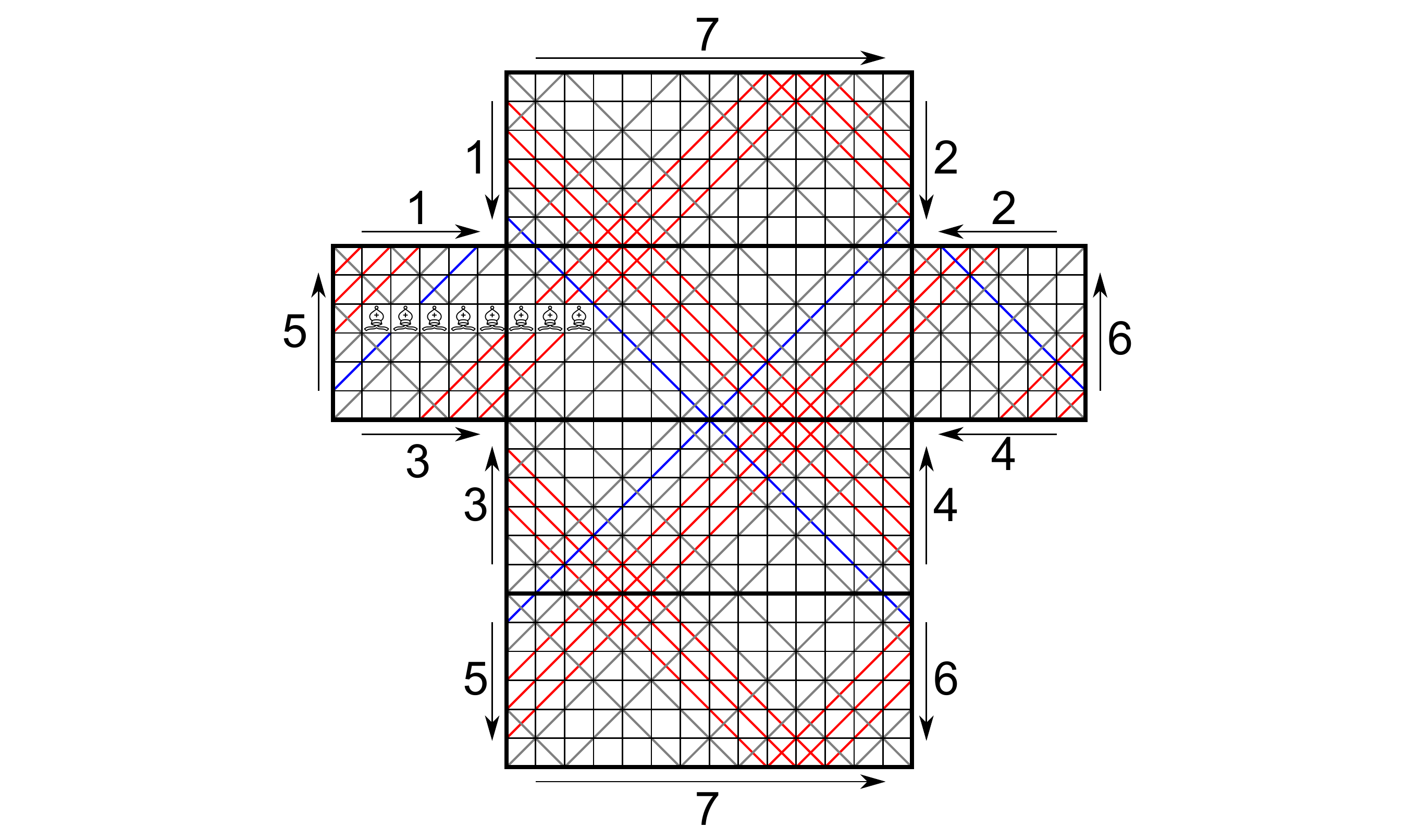}
	%\caption{Independent placement of bishops for $k$, $n$ and $m$ even: a 6 $\times$ 6 $\times$ 14 Cuboid.}
	%\label{fig_BishInd:CuboidK1B}
%\end{figure}

\newpage

\begin{construction}
\label{con_BishInd:nevenmkodd}
For $k$ odd, $n$ even and $m$ odd, $2n+3-r$ bishops are to be placed, where $r$ is odd. Choosing any $n \times m$ face, beginning from the center of its $\frac{n+r+1}{2}^{th}$ column, place $\frac{2n+3-r}{2}-2$ bishops in the cells above, extending onto the adjacent rectangular face if required. Repeat this for the cells below the center of the column to place a further $\frac{2n+3-r}{2}-2$ bishops. In each of the $n \times n$ faces adjacent to the chosen rectangular face, the open diagonals intersect in four cells, arranged in two columns and two rows. In each such face, place two bishops in those cells in the column closest to the chosen $n \times m$ face.
\end{construction}

An example of Construction~\ref{con_BishInd:nevenmkodd} is given in Figure~\ref{fig_BishInd:CuboidK2A}.

%\begin{figure}[h]
%\centering
	%\includegraphics[width=0.95\textwidth]{Figures/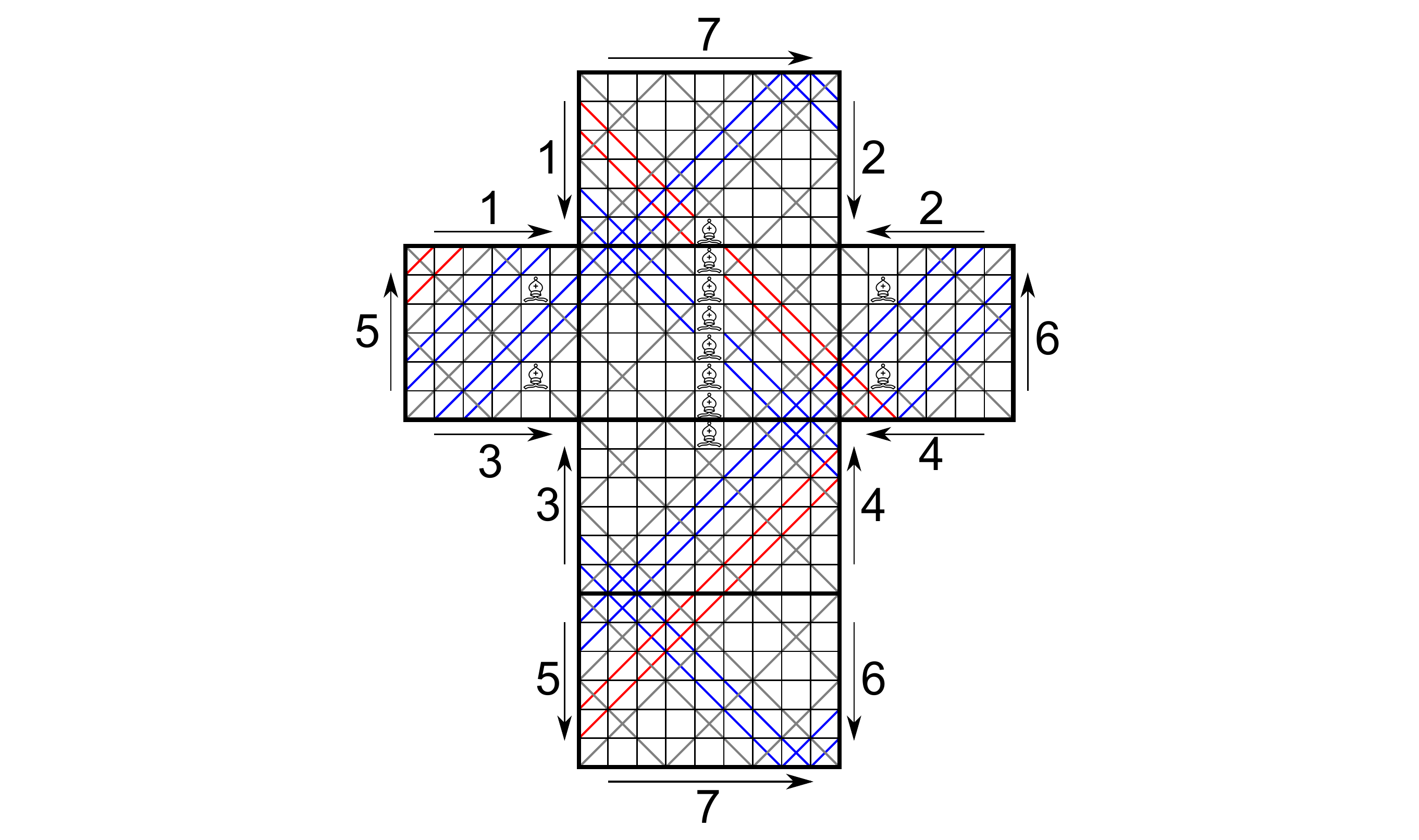}
	%\caption{Independent placement of bishops for $k$ odd, $n$ even and $m$ odd: a 6 $\times$ 6 $\times$ 9 Cuboid.}
	%\label{fig_BishInd:CuboidK2A}
%\end{figure}

\begin{construction}
\label{con_BishInd:nkevenmodd}
For $k$ even, $n$ even and $m$ odd, $n+3+r$ bishops are to be placed, where $r$ is odd. Choosing any $n \times m$ face, beginning from the center of its $\frac{2n+r+1}{2}^{th}$ column, place $\frac{n+3+r}{2}-r-1$ bishops in the cells above, extending onto the adjacent rectangular face if required. Repeat this for the cells below the center of the column to place a further $\frac{n+3+r}{2}-r-1$ bishops. In each of the $n \times n$ faces adjacent to the chosen $n \times m$ face, the open diagonals intersect in four cells, arranged in two columns and two rows. In each such face, place two bishops in those cells in the column closest to the chosen $n \times m$ face. Then in one of the $n \times n$ faces, a further $2r-2$ bishops are to be placed, $r-1$ of which will be placed between the bishops already placed; the remaining $r-1$ bishops are placed in the same rows as the latter $r-1$ bishops but in the other column in which the open diagonals intersect.
\end{construction}

An example of Construction~\ref{con_BishInd:nkevenmodd} is given in Figure~\ref{fig_BishInd:CuboidK2B}.

%\begin{figure}[h]
%\centering
	%\includegraphics[width=0.95\textwidth]{Figures/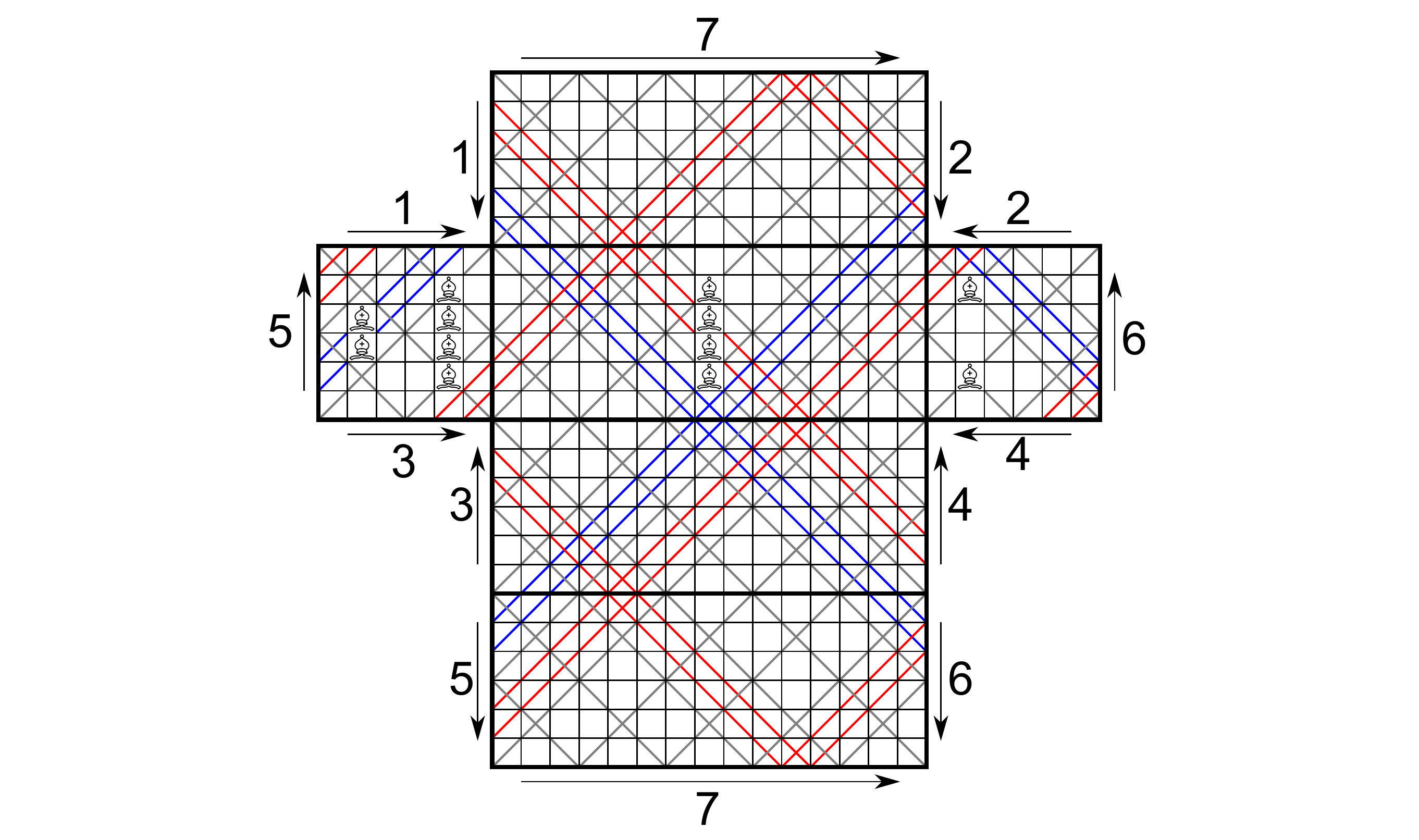}
	%\caption{Independent placement of bishops for $k$ even, $n$ even and $m$ odd: a 6 $\times$ 6 $\times$ 15 Cuboid.}
	%\label{fig_BishInd:CuboidK2B}
%\end{figure}

\begin{construction}
\label{con_BishInd:nkodd}
For $k$ odd, $n$ odd, $2n+3-r$ bishops are to be placed for $0<r<n$ and $2n+4$ bishops are to be placed for $r=0$. Choose an $n \times m$ face and denote its middle row as the pivotal row; for $0<r<n$ place $r-1$ bishops in this $n \times m$ face symmetrically around the center of the pivotal row. In one of the $n \times n$ faces, a bishop is placed in the center cell; two further bishops are placed in the middle row, coinciding with the pivotal row, in the cells at which the open diagonals intersect, extending into the adjacent rectangular faces if necessary. For this pair of placements, $n-r-1$ bishops for $0<r<n$ and $n-1$ bishops for $r=0$ are placed in the cells left of the left most placed bishop. Similarly $n-r-1$ bishops for $0<r<n$ and $n-1$ bishops for $r=0$ are placed to the right of the right most placed bishop. For the second $n \times n$ face, a bishop is placed in the center cell; two further bishops are placed in the middle column, perpendicular to the pivotal row, in the cells at which the open diagonals intersect, extending into the adjacent rectangular faces if necessary.
\end{construction}

An example of Construction~\ref{con_BishInd:nkodd} is given in Figure~\ref{fig_BishInd:CuboidK3}.

%\begin{figure}[h]
%\centering
	%\includegraphics[width=0.95\textwidth]{Figures/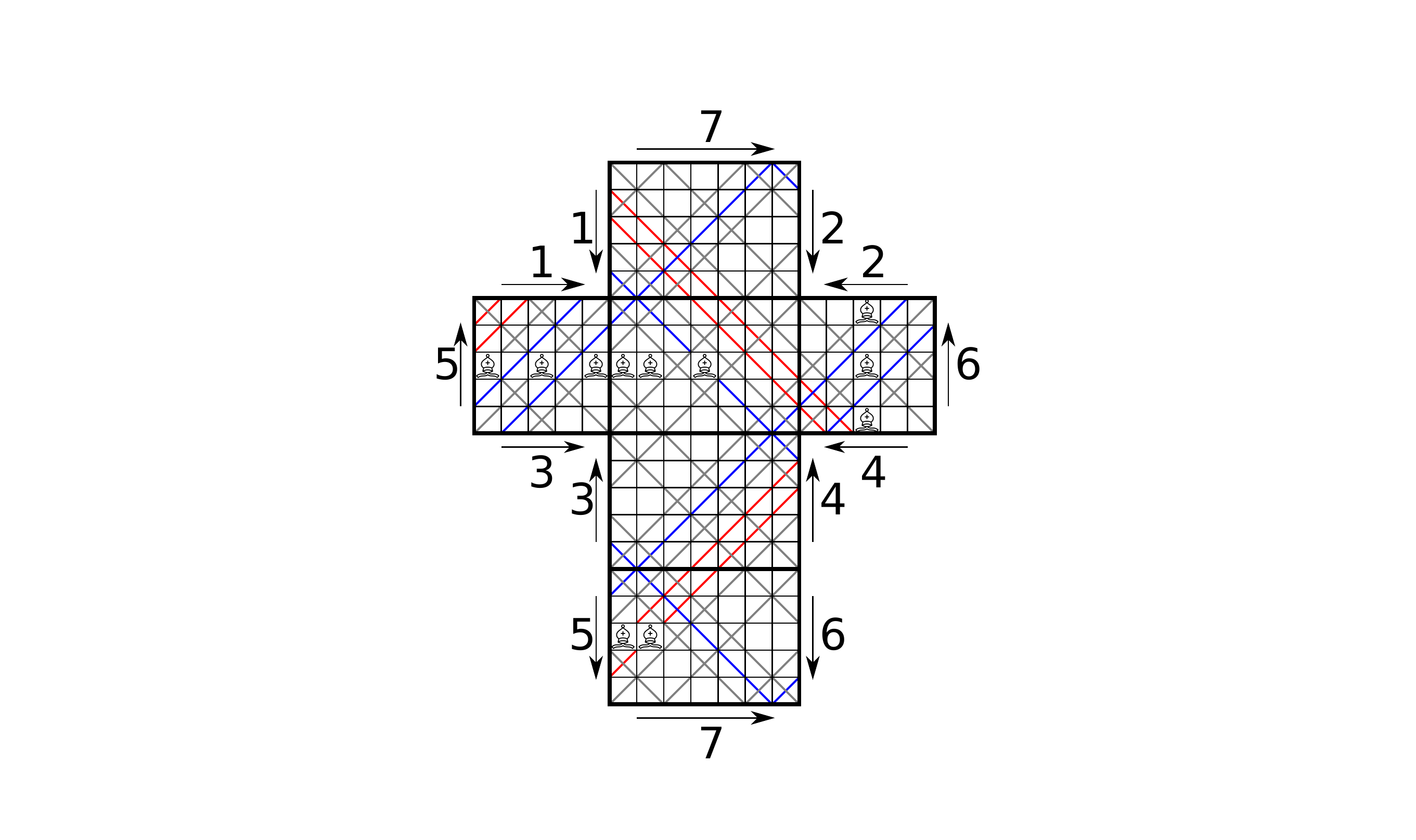}
	%\caption{Independent placement of bishops for $k$ and $n$ odd: a 5 $\times$ 5 $\times$ 7 Cuboid.}
	%\label{fig_BishInd:CuboidK3}
%\end{figure}

\begin{construction}
\label{con_BishInd:noddkeven}
For $k$ even, $n$ odd, $n+3+r$ bishops are to be placed for $0<r<n$ and $n+5$ bishops are to be placed for $r=0$. Choose an $n \times m$ face and denote its middle row as the pivotal row; $n-r-1$ bishops for $0<r<n$ and $n-1$ bishops for $r=0$ are placed in this $n \times m$ face symmetrically around the center of the pivotal row. In each of the $n \times n$ faces, a bishop is placed in the center cell; two further bishops are placed in the middle row, coinciding with the pivotal row, in the cells at which the open diagonals intersect, extending into the adjacent rectangular faces if necessary. For $0<r<n$, for one such pair of placements at the intersection of the open diagonals a bishop is placed in each empty cell between them, accounting for the remaining $2r-2$ bishops. 
\end{construction}

An example of Construction~\ref{con_BishInd:noddkeven} is given in Figure~\ref{fig_BishInd:CuboidK4}.

%\begin{figure}[h]
%\centering
	%\includegraphics[width=0.95\textwidth]{Figures/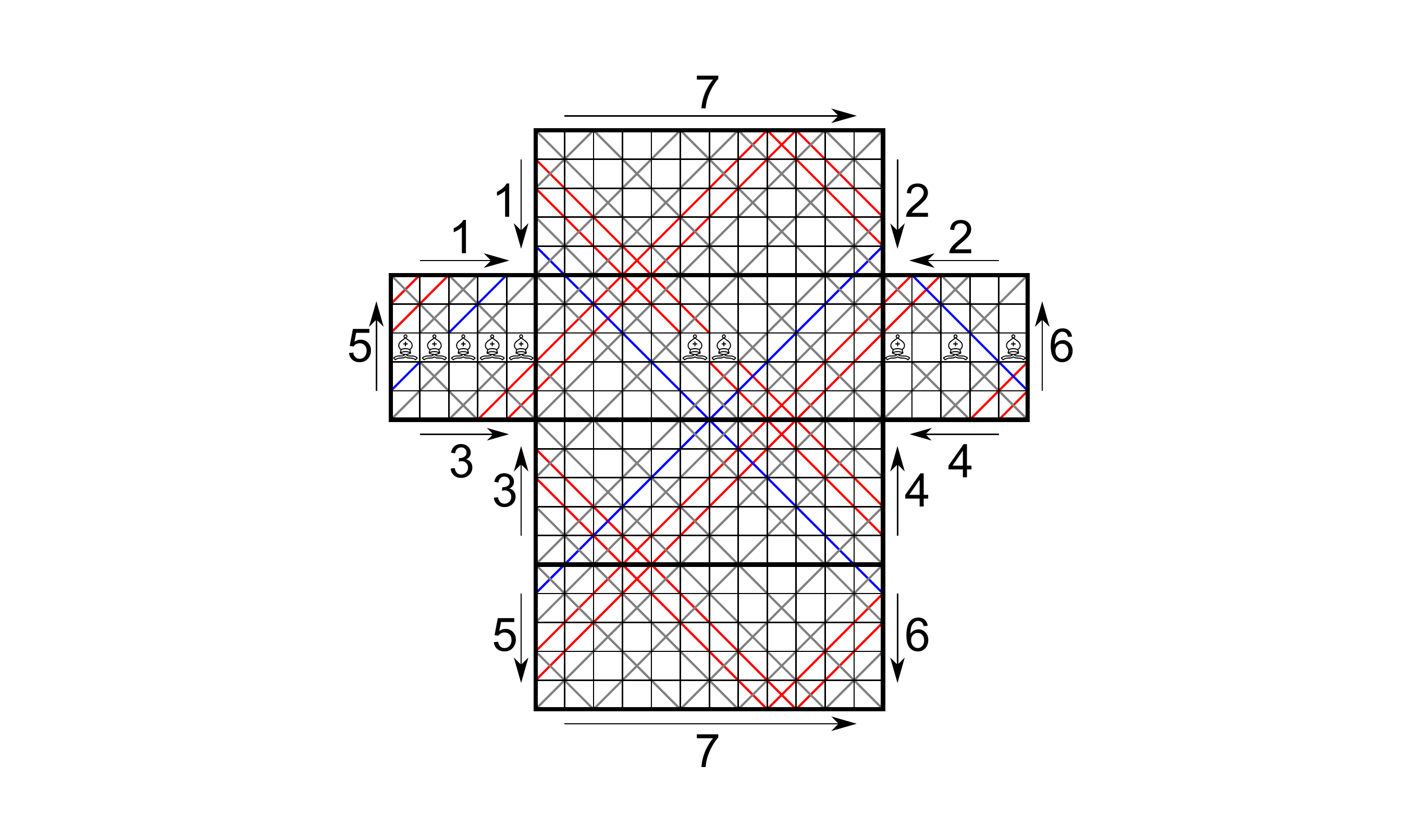}
	%\caption{Independent placement of bishops for $n$ odd and $k$ even: a 5 $\times$ 5 $\times$ 12 Cuboid.}
	%\label{fig_BishInd:CuboidK4}
%\end{figure}

%%%%%%%%%%%%%%%%%%%%%%%%%%%%%%%%%%%%%%%%%%%%%%%%%%%%%%%
\section{Result}

\begin{theorem}
\label{thm_BishInd:SquarePrisim}
For B$^3_{n,n,kn+r}$ with $k, r \in \mathbb{Z}$ and $0\leq r<n$:
$$
\beta_0(B^3_{n,n,kn+r}) =
\begin{cases}
	2n + 4 & \text{for $k$ odd, $n$ odd and $r=0$,} \\
	2n + 3 - r & \text{for $k$ odd and $n$ odd and $r>0$ or for $k$ odd and $r$ odd,} \\
	2n - r & \text{for $k$ odd and $n$ and $r$ even,} \\
	n + 5 & \text{for $k$ even, $n$ odd and $r=0$,} \\
	n + 3 + r & \text{for $k$ even, $n$ odd and $r>0$ or for $k$ even and $r$ odd,} \\
	n + r & \text{for $k$ even and $n$ and $r$ even.}
\end{cases}
$$
\end{theorem}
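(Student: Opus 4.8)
The plan is to pair matching upper and lower bounds in each of the six parity regimes, working throughout with the correspondence between independent bishop placements and matchings of the \emph{diagonal graph} $\mathcal{G}$: take the maximal diagonals of $B^3_{n,n,kn+r}$ as the vertices of $\mathcal{G}$ and the cells of the board as its edges. This is well defined because, as noted in the proof of Lemma~\ref{lem_BishInd:SquarePrisim}, no maximal diagonal crosses itself, so every cell lies on exactly two maximal diagonals. An independent set of bishops is then exactly a matching of $\mathcal{G}$, so $\beta_0(B^3_{n,n,kn+r})$ is the maximum matching number of $\mathcal{G}$, and the trivial bound $\beta_0 \le \lfloor |V(\mathcal{G})|/2\rfloor$ is essentially the content of Lemma~\ref{lem_BishInd:SquarePrisim} (here $|V(\mathcal{G})|$ is always even).

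For the lower bound in each case I would take the appropriately parametrised one of Constructions~\ref{con_BishInd:nmevenA}--\ref{con_BishInd:noddkeven} (Constructions~\ref{con_BishInd:nkodd} and~\ref{con_BishInd:noddkeven} covering both the $r=0$ and the $r>0$ sub-cases when $n$ is odd, Constructions~\ref{con_BishInd:nevenmkodd} and~\ref{con_BishInd:nkevenmodd} the $n$-even, $r$-odd sub-cases, and Constructions~\ref{con_BishInd:nmevenA} and~\ref{con_BishInd:nmevenB} the $n,r$-even sub-cases) and verify that the bishops it places are mutually non-attacking, i.e.\ that the two maximal diagonals through each placed bishop are distinct from those through every other. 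Since each construction places its bishops along a single middle row or column of one face, extending onto adjacent faces only as the construction permits, together with a bounded number of named cells on the square faces, this verification is a finite check carried out against the explicit inventory of maximal diagonals assembled in the proof of Lemma~\ref{lem_BishInd:SquarePrisim}: the first and second diagonal sets with their stated paths of movement across the identities, and the twelve open diagonals. Where the construction attains $\lfloor|V(\mathcal{G})|/2\rfloor$ the check exhibits a perfect matching of $\mathcal{G}$; in the $n,r$-even cases it also records exactly which maximal diagonals remain uncovered, which I will need for the upper bound.

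For the upper bound, in every regime except ``$n$ and $r$ both even'' the bound of Lemma~\ref{lem_BishInd:SquarePrisim} already equals the construction count and nothing more is required. When $n$ and $r$ are both even a strictly sharper bound is needed, equivalent to showing that in \emph{every} independent placement at least six maximal diagonals carry no bishop (at least eight when additionally $r=0$ and $k$ is odd; at least ten when $r=0$ and $k$ is even). I would establish this as a deficiency (Tutte--Berge type) statement for $\mathcal{G}$: exhibit a set $U$ of maximal diagonals whose deletion leaves at least $|U|+6$ odd components, namely $U$ consisting of all maximal diagonals except a short list of ``extremal'' ones (the two major diagonals of each square face together with a few of the eight corner-to-corner connecting diagonals), so that in $\mathcal{G}-U$ edges run only between members of that short list through a handful of common cells, producing one small odd component plus many isolated vertices, the surplus being six (respectively eight or ten). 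Heuristically the obstruction is present only for $n$ and $r$ even because when $n$ is odd the two major diagonals of a square face meet at its centre cell and can be covered by a single bishop, while the parity of $r$ governs where the connecting diagonals strike the square faces; either feature destroys the surplus.

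The main obstacle is this last step: pinning down the set $U$ and verifying the odd-component count precisely in each of the sub-cases $r>0$ and $r=0$, for $k$ odd and for $k$ even. This is delicate because the conflicts involved depend on exactly how the twenty-four corner cells and the eight connecting diagonals are distributed among the closed maximal diagonals, which changes both with the parity of $r$ (whether a connecting diagonal reaches a square face, and where) and with $k\bmod 4$ (the path a closed diagonal traces across identities $1$--$7$), as catalogued in Figures~\ref{fig_BishInd:CuboidK1A}--\ref{fig_BishInd:CuboidK4}. I would reduce to $k\in\{1,2,3,4\}$ by Lemma~\ref{lemma_BishInd:Expand2}, treat the cube instance $k=1$, $r=0$ via the results of \cite{Harris1}, and dispatch the remaining bounded list of cases by direct inspection of those figures.
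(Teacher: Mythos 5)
Your framework (maximal diagonals as vertices of a graph $\mathcal{G}$, cells as edges, independent placements as matchings) is sound and matches the paper's implicit reasoning, and your treatment of the lower bounds and of the regimes where the bound of Lemma~\ref{lem_BishInd:SquarePrisim} is attained coincides with the paper's. The gap is in the one step that actually requires a new idea, the improved upper bound for $n$ and $r$ both even, where your Tutte--Berge set $U$ is chosen the wrong way round. If $U$ consists of all maximal diagonals except a short list drawn from the twelve open ones, then $\mathcal{G}-U$ has at most twelve vertices and hence at most twelve odd components, while $|U|$ is roughly $4n-2r-6$; the deficiency $\mathrm{odd}(\mathcal{G}-U)-|U|$ certified by such a $U$ is therefore large and negative, and cannot establish a surplus of six uncovered diagonals (let alone eight or ten). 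No set $U$ of that shape can work, so as written the key upper bound is not proved.

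The paper's mechanism is the dual one: it exhibits a \emph{vertex cover} of $\mathcal{G}$ of size exactly $2n-r$ for $k$ odd and $n+r$ for $k$ even. Numbering the members of each cycle set of closed maximal diagonals consecutively, it shows that when $n$ and $kn+r$ are both even the odd-numbered members $\mathfrak{P}$ have the property that every cell of the board lies on at least one of them: the twelve open diagonals intersect only members of $\mathfrak{P}$ on a square, and the even-numbered members $\mathfrak{Q}$ intersect only members of $\mathfrak{P}$ of other cycle sets on a square. Hence every bishop, wherever placed, covers a member of $\mathfrak{P}$, and since independent bishops cover distinct diagonals the matching number is at most $|\mathfrak{P}|$. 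In your language you should take $U=\mathfrak{P}$: then $\mathcal{G}-U$ is edgeless, all of its $|U|+6$ (respectively $|U|+8$ or $|U|+10$ when $r=0$) vertices are isolated odd components, and Tutte--Berge returns exactly the surplus you want. Your heuristic about the centre cell of a square face when $n$ is odd is consistent with this picture but is not the operative structure; what the joint parity of $n$ and $r$ actually controls is which alternate members of each cycle set meet the open diagonals and the other cycle sets on the square faces, and that alternation is what makes $\mathfrak{P}$ a cover.
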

\begin{proof}

For $n$ odd, the bound given in Lemma~\ref{lem_BishInd:SquarePrisim} can be met. For $k$ even, Construction~\ref{con_BishInd:noddkeven} provides an independent placement of $n+3+r$ bishops for $0<r<n$ and $n+5$ bishops for $r=0$. For $k$ odd, Construction~\ref{con_BishInd:nkodd} provides an independent placement of $2n+3-r$ bishops for $0<r<n$ and $2n+4$ bishops for $r=0$.
\newline
\\
For $n$ even and $nk+r$ odd, then $r\neq0$ and the bound given in Lemma~\ref{lem_BishInd:SquarePrisim} can be met. For $k$ and $r$ odd, Construction~\ref{con_BishInd:nevenmkodd} provides an independent placement of $2n+3-r$ bishops. For $k$ even and $r$ odd, Construction~\ref{con_BishInd:nkevenmodd} provides an independent placement of $n+3+r$ bishops.
\newline
\\
For $n$ and $nk+r$ even, and hence $r$ even, the bound given in Lemma~\ref{lem_BishInd:SquarePrisim} cannot be met. For $k$ odd, $2n-r$ bishops can placed independently as given in Construction~\ref{con_BishInd:nmevenA}. For $k$ even $n+r$ bishops can be placed independently as given in Construction~\ref{con_BishInd:nmevenB}. Each of these constructions provides a lower bound for the independence number of their cases. Consider the set of $n-r-1$ maximal diagonals for $0<r<n$ (and $n-1$ maximal diagonals for $r=0$) which pass the left square face via identities 1 and 5 passing each square face once for $k$ odd and twice for $k$ even, these diagonals form a cycle set; by symmetry there are four such cycle sets for $k$ odd and two for $k$ even. For $0<r<n$, consider also the set of $r-1$ maximal diagonals which pass the left square face via identities 1 and 5 passing each square face twice for $k$ odd and once for $k$ even, these diagonals also form a cycle set; by symmetry there are two such cycle sets for $k$ odd and four for $k$ even. By numbering consecutively from 1 the members of each cycle set, each set can be further split into two groups (referring to the value of each diagonal's numbering): the odd members $\mathfrak{P}$ of which there will be $\frac{n - r}{2}$ for each set of the first kind and for $0<r<n$ there will be $\frac{r}{2}$ for each set of the second kind; and the even members $\mathfrak{Q}$ of which there will be $\frac{n - r - 2}{2}$ for each set of the first kind and for $0<r<n$ there will be $\frac{r-2}{2}$ for each set of the second kind. Since $n$ and $nk+r$ are even, only the members of $\mathfrak{P}$ of each cycle set intersect the 12 open diagonals on a square. Further, the members of $\mathfrak{Q}$ only intersect members of $\mathfrak{P}$ of other cycle sets on a square. Thus, placed anywhere, a bishop covers a member of $\mathfrak{P}$, and hence an upper bound on the maximum number of bishops that can be placed independently on the board is $|\mathfrak{P}|$ which is $2n-r$ for $k$ odd and $n-r$ for $k$ even.

\end{proof}

\newpage

\nocite{*}
\bibliographystyle{abbrvnat}
\bibliography{Ind_SqPrism_Pre_Sub}
\label{sec:biblio}

\newpage

\begin{figure}[h!]
\centering
	\includegraphics[width=0.95\textwidth]{BishInd_CuboidK1A.pdf}
	\caption{Independent placement of bishops for $k$ odd, $n$ and $m$ even: a 6 $\times$ 6 $\times$ 8 Cuboid.}
	\label{fig_BishInd:CuboidK1A}
\end{figure}

\begin{figure}[h!]
\centering
	\includegraphics[width=0.95\textwidth]{BishInd_CuboidK1B.pdf}
	\caption{Independent placement of bishops for $k$, $n$ and $m$ even: a 6 $\times$ 6 $\times$ 14 Cuboid.}
	\label{fig_BishInd:CuboidK1B}
\end{figure}

\begin{figure}[h!]
\centering
	\includegraphics[width=0.95\textwidth]{BishInd_CuboidK2A.pdf}
	\caption{Independent placement of bishops for $k$ odd, $n$ even and $m$ odd: a 6 $\times$ 6 $\times$ 9 Cuboid.}
	\label{fig_BishInd:CuboidK2A}
\end{figure}

\begin{figure}[h!]
\centering
	\includegraphics[width=0.95\textwidth]{BishInd_CuboidK2B.pdf}
	\caption{Independent placement of bishops for $k$ even, $n$ even and $m$ odd: a 6 $\times$ 6 $\times$ 15 Cuboid.}
	\label{fig_BishInd:CuboidK2B}
\end{figure}

\begin{figure}[h!]
\centering
	\includegraphics[width=0.95\textwidth]{}
	\caption{Independent placement of bishops for $k$ and $n$ odd: a 5 $\times$ 5 $\times$ 7 Cuboid.}
	\label{fig_BishInd:CuboidK3}
\end{figure}

\begin{figure}[h!]
\centering
	\includegraphics[width=0.95\textwidth]{}
	\caption{Independent placement of bishops for $n$ odd and $k$ even: a 5 $\times$ 5 $\times$ 12 Cuboid.}
	\label{fig_BishInd:CuboidK4}
\end{figure}

\end{document}